\newtheorem{thm}{Theorem}[section]
 \newtheorem{cor}[thm]{Corollary}
 \theoremstyle{definition}
 \theoremstyle{remark}
 \numberwithin{equation}{section}
\begin{document}

 %%% Insert short name of all Authors
 %%% and short title for header
 \markboth{M. Manev and V. Tavkova}  %%% short names
 {Matrix Lie groups as 3-dimensional almost paracontact almost paracomplex Riemannian manifolds}
 %%% short (or whole) title
                                                  %%% (capital initials only)

%
% %%% Insert Primary and Secondary
% %%% 2000 Mathematics Subject Classification numbers
% \class{Primary 53C15; Secondary 53C25}

 %%% Insert title (all capital letters)
 %%% Also grants or other notes
\newcommand{\ie}{i.e.\ }
\newcommand{\f}{\phi}
\newcommand{\tg}{\tilde{g}}
\newcommand{\n}{\nabla}
\newcommand{\nn}{\tilde{\n}}
\newcommand{\M}{(\mathcal{M},\allowbreak{}\f,\allowbreak{}\xi,\allowbreak{}\eta,g)}
\newcommand{\G}{\mathcal{G}}
\newcommand{\I}{\mathcal{I}}
\newcommand{\W}{\mathcal{W}}
\newcommand{\R}{\mathbb R}
\newcommand{\C}{\mathbb C}
\newcommand{\X}{\mathfrak X}
\newcommand{\F}{\mathcal{F}}
\newcommand{\U}{\mathcal{U}}
\newcommand{\HH}{\mathcal{H}}
\newcommand{\VV}{\mathcal{V}}
\newcommand{\MM}{\mathcal{M}}
\newcommand{\LL}{\mathfrak{L}}
\newcommand{\LLL}{\mathcal{L}}
\newcommand{\hatN}{\widehat{N}}
\newcommand{\tr}{{\rm tr}}
\newcommand{\ta}{\theta}
\newcommand{\shy}{\operatorname{sh}}
\newcommand{\chy}{\operatorname{ch}}
\newcommand{\ze}{\zeta}
\newcommand{\om}{\omega}
\newcommand{\lm}{\lambda}
\newcommand{\ce}{\ceta}
\newcommand{\gm}{\gamma}
\newcommand{\al}{\alpha}
\newcommand{\bt}{\beta}
\newcommand{\Dt}{\Delta}
\newcommand{\sx}{\mathop{\mathfrak{S}}\limits_{x,y,z}}
\newcommand{\D}{\mathrm{d}}
\newcommand{\ddr}{\tfrac{\D}{\D r}}
\newcommand{\ii}{\mathrm{i}}

\newcommand{\im}{\operatorname{im}}
\newcommand{\Span}{\operatorname{span}}
\newcommand{\Id}{\operatorname{Id}}

\newcommand{\thmref}[1]{Theorem~\ref{#1}}
\newcommand{\lemref}[1]{Lemma~\ref{#1}}
\newcommand{\cororref}[1]{Corollary~\ref{#1}}
\newcommand{\propref}[1]{Proposition~\ref{#1}}
\newcommand{\tablref}[1]{Table~\ref{#1}}
%%%%%%
 %%% about the article that should go on the front page should be placed here.
\title[Matrix Lie Groups As 3-Dimensional Almost Paracontact \ldots] 
{Matrix Lie Groups as 3-Dimensional Almost Paracontact Almost Paracomplex Riemannian Manifolds}

 %%% Inser full names of all Authors
\author[M. Manev, V. Tavkova]{Mancho Manev and Veselina Tavkova}

 %%% Insert a brief summary (50-150 words)
\begin{abstract}
Lie groups considered as three-dimensional almost paracontact almost paracomplex Riemannian manifolds are investigated.   
In each basic class of the classification used for the manifolds under consideration, a correspondence is established between the Lie algebra and the explicit matrix representation of its Lie group.
%A family of examples is given to support the results obtained. 
\end{abstract}

\subjclass[2010]{
53C15, %General geometric structures on manifolds (almost complex, almost product structures, etc.)
22E60, %Lie algebras of Lie groups
22E15%: General properties and structure of real Lie groups
}

\keywords{Almost paracontact structure, almost paracomplex structure, Riemannian metric, Lie group, Lie algebra}

\maketitle

\section{Introduction}\label{sec-intro}
 \vglue-10pt
 \indent

In the present paper, we continue the investigations of almost paracontact almost paracomplex Riemannian manifolds. 
In \cite{Sato76}, I. Sato introduced  the concept of (almost) paracontact structure compatible with a Riemannian metric as an analogue of almost contact Riemannian manifold. After that,  a number of authors develop the
differential geometry of these manifolds. The  beginning of the investigations on the  paracontact Riemannian manifolds is given by  \cite{AdatMiya77}, \cite{Sa80}, \cite{Sato77} and \cite{Sato78}.

% and in particular of paracontact Riemannian
%manifolds. In the beginning are the papers \cite{AdatMiya77},\cite{Sato77}, \cite{Sato78} and \cite{Sa80}.

In \cite{ManSta01}, a classification of almost paracontact Riemannian manifolds of type $(n,n)$ is made, 
taking into account the relevant notion given by Sasaki in \cite{Sa80}. 
They are  $(2n+1)$-dimensional and the induced almost product structure
on the paracontact distribution is traceless, \ie  it is an almost paracomplex structure. 
In  \cite{ManVes}, these manifolds are called \emph{almost paracontact almost paracomplex manifolds}.

In a series of papers, e.g. \cite{AbbGarb, Barb, GriManMek, DobrMek, FCG, FinoGran, VMMolina, HMan, HMan2, Ovando, ZamNak}, the authors consider Lie groups as manifolds equipped with different additional tensor structures and metrics compatible with them. 
Furthermore, in our previous work \cite{ManVes2}, we construct and characterize a
family of 3-dimensional Lie algebras corresponding to Lie groups considered as almost paracontact almost paracomplex
Riemannian manifolds. Curvature properties of these manifolds are studied.

%Let us note that Lie groups as almost contact nanifolds with a B-metric of the lowest dimenssion three are studied in \cite{HManMek} and their matrix representation are obtained in  \cite{HMan}. 

It is known by \cite{Gil} that each representation of a Lie algebra corresponds uniquely to a representation of a simply connected Lie group. This relation is one-to-one. 
Hence, knowledge the representation of a certain Lie algebra settles the issue of the representation of its Lie group.

In the present work, our goal is to find a correspondence between the Lie algebras constructed
in \cite{ManVes2} and explicit matrix representations of their Lie groups for each of the basic classes of the classification used for the manifolds under study.

The paper is organized as follows.  
In Sect.~\ref{sect-mfds}, we recall some necessary facts about the investigated manifolds and related Lie algebras. 
In Sect.~\ref{sect-lie}, we find the explicit correspondence between the Lie algebras determined in all basic classes of the manifolds studied and respective matrix Lie groups.
%In Sect.~\ref{sect-ex}, we construct some examples to support the results in the previous section.

%%%%%%%%%%%%%%%%%%%%%%%%%%%%%%%%%%%%%%%%%%

 \section{Preliminaries}\label{sect-mfds}
 \vglue-10pt
 \indent

\subsection{Almost paracontact almost paracomplex Riemannian manifolds}%\label{sect-lie}
 \vglue-10pt
 \indent

Let $(\MM,\f,\xi,\eta,g)$ be an almost paracontact almost paracomplex Riemannian manifold. 
This means that $\MM$ is a $(2n+1)$-dimensional real differentiable manifold equipped with an almost paracontact almost paracomplex structure $(\f,\xi,\eta)$, \ie $\f$ is a fundamental $(1,1)$-tensor field of the tangent bundle $T\MM$ of $\MM$, $\xi$ is a characteristic vector field and $\eta$ is its dual 1-form satisfying the following conditions:
\begin{equation*}\label{str}
\begin{array}{c}
\f^2 = \I - \eta \otimes \xi,\quad \eta(\xi)=1,\quad
\eta\circ\f=0,\quad \f\xi = 0,\quad \tr\f = 0,
\end{array}
\end{equation*}
where $\I$ denotes the identity on $T\MM$. 
Moreover, $g$ is a Riemannian metric %of  signature $(n+1,n)$ 
that is compatible with the structure of the manifold so that the following condition is fulfilled
\begin{equation*}\label{str2}
\begin{array}{c}
g(\f x, \f y) = g(x,y) - \eta(x)\eta(y)
\end{array}
\end{equation*}
for arbitrary $x,y \in T\MM$  \cite{Sato76}, \cite{ManSta01}.

Further $x$, $y$, $z$, $w$ will stand for arbitrary
elements of the Lie algebra $\X(\MM)$ of tangent vector fields on $\MM$ or vectors in the tangent space $T_p\MM$ at $p\in \MM$.

Let us recall that an almost paracomplex structure is a traceless almost product structure $P$, \ie     
 $P^2=\I$,  $P\neq \pm \I$ and $\tr P=0$. 
Because of $\tr P=0$, the eigenvalues $+1$ and $-1$ of $P$ have one and the same multiplicity $n$.

%Further we consider the case when the dimension of $\MM$ is $(2n+1)$. Then  $\HH=\ker(\eta)$  is the $2n$-dimensional paracontact distribution of the tangent bundle of
%the studied manifold, the endomorphism $\f$ induces an almost paracomplex structure on each fibre of $\HH$, so that the pair $(\HH,\f)$ is $2n$-dimensional  almost paracomplex manifold.

Let $\nabla$ be the Levi-Civita connection generated by $g$. The tensor field $F$ of type (0,3) on $\MM$  is defined by
\begin{equation*}\label{F=nfi}
F(x,y,z)=g\bigl( \left( \nabla_x \f \right)y,z\bigr).
\end{equation*}
The following equalities define 1-forms associated with $F$, known as the Lee forms of $\MM$: 
\begin{equation*}\label{t}
\theta(z)=g^{ij}F(e_i,e_j,z),\quad \theta^*(z)=g^{ij}F(e_i,\f
e_j,z), \quad \omega(z)=F(\xi,\xi,z),
\end{equation*}
where $g^{ij}$ are the components of the inverse matrix of $g$ with respect to a basis $\left\{e_i;\xi\right\}$ $(i=1,2,\dots,2n)$ of  $T_p\MM$ at an arbitrary point $p\in \MM$. 

A classification of almost paracontact almost paracomplex Riemannian manifolds is given in \cite{ManSta01}.
It consists of eleven basic classes $\F_{s}$, $s\in\{1,2,\dots, 11\}$, and each of them is defined by conditions for $F$. 
In \cite{ManVes}, we determine the components $F_{s}$ of $F$ that correspond to each $\F_{s}$. In other words, the manifold $\M$ belongs to $\F_{s}$ %$(i\in\{1,2,\dots,11\})$
if and only if the equality $F=F_s$ is satisfied. % Then $\M$ is also called an \emph{$\F_{s}$-manifold}.
The intersection of the basic classes is the special class $\F_0$
defined by the condition $F=0$, which is equivalent to the covariant constancy of the structure tensors with respect to $\n$, 
\ie $\n\f=\n\xi=\n\eta=\n g=0$.

% Hence $\F_0$ is the class of the manifolds of the considered type with $\n$-parallel structures, \ie $\n\f=\n\xi=\n\eta=\n g=\n \tg=0$.

%In the present paper we consider the manifold under study with the lowest dimension, \ie $\dim{\MM}=3$.

Let us consider the studied manifold of the lowest dimension, \ie $\dim{\MM}=3$.

Let $\left\{e_0,e_1,e_2\right\}$, where $e_0=\xi,e_1=\f e_2,e_2=\f e_1$, be a \emph{$\f$-basis} of $T_p\MM$.  Thus, it is an orthonormal basis with respect to $g$, \ie $g(e_i,e_j)=\delta_{ij}$ for all $i,j\in\{0,1,2\}$.
In \cite{ManVes}, we determine the components ${F_{ijk}=F(e_i,e_j,e_k)}$, ${\ta_k=\ta(e_k)}$, ${\ta^*_k=\ta^*(e_k)}$ and ${\om_k=\om(e_k)}$ of $F$, $\ta$, $\ta^*$ and $\om$, respectively,  with respect to $\left\{e_0,e_1,e_2\right\}$ as  follows: 
\begin{equation*}\label{t3}
\begin{array}{c}
	\begin{array}{ll}
		\ta_0=F_{110}+F_{220},\quad & \ta_1=F_{111}=-F_{122}=-\ta^*_2,\\[4pt]
		\ta^*_0=F_{120}+F_{210}, \quad &\ta_2=F_{222}=-F_{211}=-\ta^*_1,\\[4pt]
	\end{array}\\
	\begin{array}{lll}
		\om_0=0,  \qquad & \om_1=F_{001},\qquad & \om_2=F_{002}.
	\end{array}
\end{array}
\end{equation*}

Let $x=x^ie_i$, $y=y^ie_i$, $z=z^ie_i$ be arbitrary vectors in $T_p\MM$, $p\in \MM$, decomposed with respect to the $\f$-basis. 
Then, the components $F_s$, $s\in\{1,2,\dots,11\}$, of $F$ on $\M\in\F_s$ have the following form: \cite{ManVes}
\begin{equation}\label{Fi3}
\begin{array}{l}
F_{1}(x,y,z)=\left(x^1\ta_1-x^2\ta_2\right)\left(y^1z^1-y^2z^2\right); \\[4pt]
F_{2}(x,y,z)=F_{3}(x,y,z)=0;
\\
F_{4}(x,y,z)=\frac{\ta_0}{2}\Bigl\{x^1\left(y^0z^1+y^1z^0\right)
+x^2\left(y^0z^2+y^2z^0\right)\bigr\};\\[4pt]
F_{5}(x,y,z)=\frac{\ta^*_0}{2}\bigl\{x^1\left(y^0z^2+y^2z^0\right)
+x^2\left(y^0z^1+y^1z^0\right)\bigr\};\\[4pt]
F_{6}(x,y,z)=F_{7}(x,y,z)=0;\\[4pt]
F_{8}(x,y,z)=\lm\bigl\{x^1\left(y^0z^1+y^1z^0\right)
-x^2\left(y^0z^2+y^2z^0\right)\bigr\},\\[4pt]
%&
\hspace{38pt} \lm=F_{110}=-F_{220}
;\\[4pt]
F_{9}(x,y,z)=\mu\bigl\{x^1\left(y^0z^2+y^2z^0\right)
-x^2\left(y^0z^1+y^1z^0\right)\bigr\},\\[4pt]
%&
\hspace{38pt} \mu=F_{120}=-F_{210}
;\\[4pt]
F_{10}(x,y,z)=\nu x^0\left(y^1z^1-y^2z^2\right),\quad
\nu=F_{011}=-F_{022}
;\\[4pt]
F_{11}(x,y,z)=x^0\bigl\{\om_{1}\left(y^0z^1+y^1z^0\right)
+\om_{2}\left(y^0z^2+y^2z^0\right)\bigr\}.
\end{array}
\end{equation}
Therefore, the basic  classes of the 3-dimensional manifolds of the investigated type are
$\F_1$, $ \F_4$, $\F_5$,  $\F_8$, $\F_9$, $\F_{10}$,  $\F_{11}$, \ie $\F_2$, $\F_3$, $\F_6$, $\F_7$ are restricted to $\F_0$ \cite{ManVes}.

 \indent

\subsection{The Lie algebras corresponding to Lie groups as almost paracontact almost paracomplex Riemannian manifolds}%\label{sect-lie}
 \vglue-10pt
 \indent

In this subsection we recall  the necessary results obtained in \cite{ManVes2}. 

Let $\LLL$ be a 3-dimensional real connected Lie group and let
$\mathfrak{l}$ be its Lie algebra with a basis
$\{E_{0},E_{1},E_{2}\}$ of left invariant vector fields. An almost paracontact almost paracomplex structure $(\f,\xi,\eta)$ and a Riemannian metric $g$ are defined as follows:
\begin{equation*}\label{strL}
\begin{array}{c}
\f E_0=0,\quad \f E_1=E_{2},\quad \f E_{2}= E_1,\quad \xi=
E_0,\quad \\[4pt]
\eta(E_0)=1,\quad \eta(E_1)=\eta(E_{2})=0,%\\[4pt]
\end{array}
\end{equation*}
\begin{equation*}\label{gL}
  g(E_i,E_j)=\delta_{ij},\qquad i,j\in\{0,1,2\}.
\end{equation*}
The resulting manifold $(\LLL,\f,\xi,\eta,g)$ is found to be a 3-dimensional almost paracontact almost paracomplex Riemannian manifold.
%\begin{prop}
%The manifold $(\LLL,\f,\xi,\eta,g)$ is a 3-dimensional almost paracontact almost paracomplex Riemannian manifold.
%\end{prop}
% Further, the denotation  $(\LLL,\f,\xi,\eta,g)$ will  stand for this manifold.

The corresponding Lie algebra $\mathfrak{l}$
 is determined as follows
\begin{equation*}\label{lie}
\left[E_{i},E_{j}\right]=C_{ij}^k E_{k}, \quad i, j, k \in \{0,1,2\},
\end{equation*}
where $C_{ij}^k$ are the commutation coefficients.

\begin{thm}[\cite{ManVes2}]\label{thm-Fi-L}
The manifold $(\LLL,\f,\xi,\eta,g)$ belongs to the basic class $\F_s$
($s \in \{1,\allowbreak{}4,5,8,9,10,11\}$) if and only
if the corresponding Lie algebra $\mathfrak{l}$ is determined by
the following commutators:
\begin{equation*}\label{Fi-L}
\begin{array}{llll}
\F_1:\; &[E_0,E_1]=0, \; & [E_0,E_2]=0, \; &   [E_1,E_2]=\al E_1-\bt E_2;
\\[4pt]
\F_4:\; &[E_0,E_1]=\al E_2, \; &   [E_0,E_2]=\al E_1, \; &
[E_1,E_2]=0;
\\[4pt]
\F_5:\; &[E_0,E_1]=\al E_1, \; &   [E_0,E_2]=\al E_2, \; &
[E_1,E_2]=0;
\\[4pt]
\F_8:\; &[E_0,E_1]=\al E_2, \; &   [E_0,E_2]=-\al E_1, \; &
[E_1,E_2]=2\al E_0;
\\[4pt]
\F_9:\; &[E_0,E_1]=\al E_1, \; &   [E_0,E_2]=-\al E_2, \; &
[E_1,E_2]=0;
\\[4pt]
%\end{array}
%\end{equation*}
%\begin{equation*}
%\begin{array}{llll}
\F_{10}:\; &[E_0,E_1]=-\al E_2, \; &   [E_0,E_2]=\al E_1, \; &
[E_1,E_2]=0;
\\[4pt]
\F_{11}:\; &[E_0,E_1]=\al E_0, \; &   [E_0,E_2]=\bt E_0, \; &
[E_1,E_2]=0,
\end{array}
\end{equation*}
where $\al$, $\bt$ are arbitrary real parameters.
Moreover, the
relations of $\al$ and $\bt$ with the non-zero components
$F_{ijk}$ in the different basic classes $\F_s$ from \eqref{Fi3} are the following:
\begin{equation*}\label{Fi-L-alpha}
\begin{array}{rlrl}
\F_1:& \al=\frac12 \ta_1, \; \bt=-\frac12 \ta_2; \qquad
&\F_4:& \al=\frac12 \ta_0;\\[4pt]
\F_5:& \al=\frac12 \ta^*_0; \qquad
&\F_8:& \al=\lm; \\[4pt]
\F_9:& \al=\mu; \qquad
&\F_{10}:& \al=\frac12 \nu;   \\[4pt]
\F_{11}:& \al=\om_2, \; \bt=\om_1.\qquad & &
\end{array}
\end{equation*}
\end{thm}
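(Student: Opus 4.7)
The strategy is to translate the condition that $(\LLL,\f,\xi,\eta,g)$ belongs to $\F_s$, which is a system of linear equations in the components $F_{ijk}$ read off from \eqref{Fi3}, into a system of linear equations in the nine commutation coefficients $C_{ij}^k$. Since $\{E_0,E_1,E_2\}$ is a left-invariant orthonormal frame with $g(E_i,E_j)=\delta_{ij}$, Koszul's formula for left-invariant vector fields simplifies to
$$2g(\n_{E_i}E_j,E_k)=C_{ij}^k-C_{jk}^i+C_{ki}^j,$$
so the Christoffel symbols $\Gamma^k_{ij}=g(\n_{E_i}E_j,E_k)$ in this frame are explicit linear expressions in the $C_{ij}^k$.

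The next step is to compute $F$ componentwise. Because $\f$ acts on the basis simply as $\f E_0=0$, $\f E_1=E_2$, $\f E_2=E_1$, the identity
$$(\n_{E_i}\f)E_j=\n_{E_i}(\f E_j)-\f(\n_{E_i}E_j)$$
expresses each $F_{ijk}=g((\n_{E_i}\f)E_j,E_k)$ as a linear combination of the $\Gamma^k_{ij}$, and hence of the $C_{ij}^k$. Plugging these values into the formulas for $\ta_k$, $\ta^*_k$ and $\om_k$ recalled just before \eqref{Fi3} produces an explicit dictionary between the commutation coefficients and the Lee-form components.

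For each class $\F_s$ with $s\in\{1,4,5,8,9,10,11\}$, formula \eqref{Fi3} prescribes exactly which $F_{ijk}$ may be nonzero and how they must be related. Imposing these relations on the dictionary above gives a linear system in the $C_{ij}^k$ whose solution is precisely the family of commutators stated in the theorem, and the parameters $\al,\bt$ emerge as the indicated multiples of $\ta_0$, $\ta_1$, $\ta_2$, $\ta^*_0$, $\lm$, $\mu$, $\nu$, $\om_1$, $\om_2$. The converse direction consists in substituting each listed bracket back into the Koszul expression and checking that the resulting $F$ coincides with $F_s$ of \eqref{Fi3} while all components associated to the other classes vanish.

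The main difficulty is bookkeeping rather than conceptual: nine unknown $C_{ij}^k$ must be tracked through the $27$ potential components $F_{ijk}$, and the case analysis must be repeated for each of the seven nontrivial classes. The one genuinely delicate point is that the Jacobi identity is a quadratic constraint on the $C_{ij}^k$ and is not a priori implied by the linear equations coming from the class conditions; because the solutions listed are very sparse (at most two nonzero commutators depending on at most two parameters), Jacobi turns out to hold automatically, but this must be confirmed class by class.
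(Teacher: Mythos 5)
Your proposal is sound: computing the Levi--Civita connection of the left-invariant metric via the Koszul formula $2g(\n_{E_i}E_j,E_k)=C_{ij}^k-C_{jk}^i+C_{ki}^j$, expressing the components $F_{ijk}$ linearly in the structure constants, and matching against \eqref{Fi3} class by class (with the Jacobi identity checked separately, since it is quadratic and not implied by the linear class conditions) is exactly the standard route, and the sparseness of the resulting brackets does make Jacobi immediate in each case. Note that the present paper states this theorem without proof, importing it from \cite{ManVes2}, so there is no in-paper argument to compare against; your outline reconstructs what that reference does.
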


Obviously, if $\al$ (and $\bt$ if any) vanish in the corresponding class, then the Lie algebra is Abelian and the manifold belongs to $\F_0$. We further exclude this trivial case from our considerations, \ie we assume that $(\al,\bt)\neq(0,0)$.

Recall that the class of the para-Sasakian paracomplex Riemannian
manifolds is $\F'_4$, which is the subclass of $\F_4$ determined by the condition $\ta(\xi)=-2n$
\cite{ManVes}. Then, \thmref{thm-Fi-L} has the following
\begin{cor}[\cite{ManVes2}]\label{cor:para}
The manifold $(\LLL,\f,\xi,\eta,g)$ is para-Sasakian if and only
if the corresponding Lie algebra $\mathfrak{l}$ is determined by
the following commutators:
\[
[E_0,E_1]=-E_2, \quad   [E_0,E_2]=-E_1, \quad [E_1,E_2]=0.
\]
\end{cor}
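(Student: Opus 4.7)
The plan is to specialize \thmref{thm-Fi-L} to the subclass $\F'_4$ of $\F_4$ cut out by the defining condition of para-Sasakian manifolds, namely $\ta(\xi)=-2n$, and read off the precise value of $\al$ that this condition forces.

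First, I would observe that the manifold has dimension $2n+1=3$, so $n=1$ and hence the para-Sasakian condition reads $\ta(\xi)=-2$. Since $\xi=E_0=e_0$ in the chosen $\f$-basis, this is the same as $\ta_0=-2$.

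Next, I would invoke the $\F_4$ part of \thmref{thm-Fi-L}, which provides the commutators
\[
[E_0,E_1]=\al E_2,\quad [E_0,E_2]=\al E_1,\quad [E_1,E_2]=0,
\]
together with the relation $\al=\tfrac{1}{2}\ta_0$ between the structure parameter and the single non-trivial Lee-form component in this class. Substituting $\ta_0=-2$ gives $\al=-1$, and plugging this back into the commutators yields exactly the expressions stated in the corollary. The converse is immediate by reading the same chain backwards: the stated commutators fall under $\F_4$ with $\al=-1$ via \thmref{thm-Fi-L}, which forces $\ta_0=-2=-2n$, i.e.\ membership in $\F'_4$.

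There is no real obstacle here beyond bookkeeping; the only points requiring care are the correct use of $n=1$ in the defining equality $\ta(\xi)=-2n$ and the identification $\ta(\xi)=\ta_0$, which is built into the definition of $\ta_k$ given earlier in the paper. Everything else is a direct substitution into the already-established correspondence of \thmref{thm-Fi-L}.
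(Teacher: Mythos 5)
Your argument is exactly the one the paper intends: the corollary is stated as an immediate consequence of Theorem~\ref{thm-Fi-L} for the subclass $\F'_4$ defined by $\ta(\xi)=-2n$, and with $n=1$ this gives $\ta_0=-2$, hence $\al=\frac12\ta_0=-1$ in the $\F_4$ commutators, which is precisely your computation. The proposal is correct and matches the paper's route, including the reversibility of the substitution for the converse direction.
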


 \section{Matrix representation of the 3-dimensional  Lie groups equipped with the structure studied}\label{sect-lie}
 \vglue-10pt
 \indent

Let $(\LLL, \f, \xi, \eta, g)$ be a 3-dimensional almost paracontact almost paracomplex Riemannian manifold, where $\LLL$ is a Lie group with associated Lie algebra $\mathfrak{g}$.
In Theorem \ref{thm-Fi-L}, we determine the Lie algebra by commutators such that the manifold belongs to the class  $\F_s$ ($s \in \{1,4,5,8,9,10,11\}$).

In the following theorem, which is the main theorem in the present work, we obtain an explicit matrix representation of a Lie group  $\G$ isomorphic to the given Lie group  $\LLL$ with the same Lie  algebra $\mathfrak{g}$ when $(\LLL, \f, \xi, \eta, g)$ belongs to each of $\F_s$.
\begin{thm}\label{thm:main}
Let $(\LLL,\f,\xi,\eta,g)$ be an almost paracontact almost paracomplex Riemannian manifold belonging to the basic class $\F_s$
($s \in \{1,4,5,8,9,10,11\}$). Then the compact simply connected Lie group $\G$ isomorphic to $\LLL$, both with one and the same Lie algebra $\mathfrak{g}$, has the following matrix representation
\begin{equation}\label{eA}
  e^A=E+tA+uA^2,
\end{equation}
where $E$ is the identity matrix, $A$ is the matrix representation of the corresponding Lie algebra and $t$, $u$ are real parameters. The matrix form of $A$ as well as the expressions of $t$ and $u$ for each of $\F_s$ are given in Table~\ref{T1}, where $a,b,c$ are arbitrary reals and $\al$, $\bt$ are introduced in Theorem~\ref{thm-Fi-L}.
\end{thm}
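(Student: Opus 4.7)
The plan is to treat the seven basic classes separately and, for each $\F_s$ with $s\in\{1,4,5,8,9,10,11\}$, produce an explicit matrix realization of the generators $E_0,E_1,E_2$ — i.e.\ matrices $M_0,M_1,M_2$ whose commutators reproduce the Lie brackets prescribed by \thmref{thm-Fi-L}. A general element of $\mathfrak{g}$ is then $A=aM_0+bM_1+cM_2$ with $a,b,c\in\R$, and the corresponding element of the Lie group is obtained by the matrix exponential $e^A$.

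The key structural step is to establish, in each class, a polynomial identity of degree at most three satisfied by $A$. For the nilpotent/solvable cases ($\F_1,\F_4,\F_5,\F_9,\F_{11}$) this takes the form $A^3=p\,A^2+q\,A$ with $p,q$ scalars depending on $a,b,c,\al,\bt$ — often degenerating to $A^3=0$ or $A^2=\lm A$; for $\F_8$ one expects a relation of compact type $A^3=-\mu^2 A$, and for $\F_{10}$ a hyperbolic analogue $A^3=\mu^2 A$. Such identities can be read off directly from the (typically triangular or block) shape of the matrix realization, or derived from the Cayley--Hamilton theorem applied to a suitable $3\times 3$ model whose trace and determinant vanish appropriately.

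Once such an identity is available, the series $e^A=\sum_{k\ge0}A^k/k!$ collapses onto $\Span\{E,A,A^2\}$: the scalars $t,u$ are obtained by grouping terms via the recursion induced by the identity. For example, if $A^3=-\mu^2 A$ one finds $t=\sin\mu/\mu$, $u=(1-\cos\mu)/\mu^2$; if $A^3=\mu^2 A$ these are replaced by their hyperbolic counterparts; and if $A^3=0$ one gets $t=1$, $u=\tfrac12$. Collecting these expressions class by class yields the entries of \tablref{T1}, and writing $e^A=E+tA+uA^2$ provides the advertised matrix parametrization of the Lie group $\G$ sharing the Lie algebra $\mathfrak{g}$ with $\LLL$.

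The main obstacle is not the exponential computation itself but the clever choice of the faithful matrix representation $(M_0,M_1,M_2)$ for each class: it must (i) realize the prescribed commutators exactly, and (ii) be of sufficiently low order or special form so that $A$ satisfies an identity giving a closed two-parameter formula for $e^A$. One naturally searches among standard $3\times 3$ (or $4\times 4$) families — strictly upper-triangular Heisenberg-type matrices for the nilpotent case $\F_8$, triangular matrices with a distinguished row or column for the solvable classes $\F_1,\F_4,\F_5,\F_9,\F_{11}$, and block rotation/boost forms for $\F_8$ and $\F_{10}$. A final verification that the resulting matrix group is isomorphic to $\LLL$ (both having Lie algebra $\mathfrak{g}$), invoking the correspondence between simply connected Lie groups and their algebras cited via \cite{Gil}, completes the argument.
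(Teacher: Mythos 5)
Your overall strategy --- realize the brackets by matrices $M_i$, set $A=aM_0+bM_1+cM_2$, find a low-degree polynomial identity for $A$, and collapse the exponential series onto $\Span\{E,A,A^2\}$ --- is sound, and the second half is a genuinely different (and arguably cleaner) route than the paper's: the paper diagonalizes, computing eigenvalues, eigenvectors, $P$, $P^{-1}$ and $e^A=Pe^JP^{-1}$ separately in the invertible and non-invertible subcases of each class, whereas Cayley--Hamilton plus series summation yields $t$ and $u$ as entire functions of $\tr A$ or $\tr A^2$ in one stroke and handles the degenerate cases by continuity. Your sample outputs check out against Table~\ref{T1}: for $\F_1$ the matrix has rank one, so $A^2=(\tr A)A$ and $t=(e^{\tr A}-1)/\tr A$; for $\F_4$, $\tr A=\det A=0$ gives $A^3=\tfrac12(\tr A^2)\,A$ and the hyperbolic $t,u$ of the table.

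The genuine gap is that you never specify the representation, and the matrices $A$ are half the content of Table~\ref{T1}. You describe the choice of $(M_0,M_1,M_2)$ as a ``clever'' search among triangular, Heisenberg and block families; the paper instead fixes it once and for all by the adjoint-type prescription $(M_i)_j^k=-C_{ij}^k$ (citing \cite{Gil}), which mechanically converts the commutators of Theorem~\ref{thm-Fi-L} into the seven matrices of the table --- no search is needed, and without this (or some other explicit choice) the table entries cannot be derived or verified. Note also that this prescription is not faithful in general (for $\F_1$ one gets $M_0=0$ because $E_0$ is central), so your insistence on faithfulness would actually steer you away from the representation the paper uses. Finally, a small slip: $\F_8$ is not the nilpotent Heisenberg-type case --- it is the only non-solvable class, with $\tr A^2<0$ and trigonometric $t,u$; the nilpotent degenerations that do occur in the proof all satisfy $A^2=0$, which is why Table~\ref{T1} lists $u=0$ there rather than your $u=\tfrac12$ (both give the same $e^A$, of course).
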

\begin{proof}
As it is known from  \cite{Gil}, the commutation coefficients provide a matrix representation $A$ of a Lie algebra. Then, the matrix representation of $\mathfrak{g}$ is the following 
 \begin{equation}\label{A}
A=aM_0+bM_1+cM_2,  \quad a, b, c \in \R,
\end{equation}
where the basic matrices $M_i$ have entries determined by the commutation coefficients of $\mathfrak{g}$ as follows 
\begin{equation}\label{Mij}
(M_i)_j^k=-C_{ij}^k, \quad i, j, k \in \{0,1,2\}.
\end{equation}

\begin{table}
  \caption{The matrix form of $A$ and the expressions of $t$ and $u$ for $\F_s$%, $s\in\{1,4,5,8,\dots,11\}
	}\label{T1}
%  \vglue4mm
\centering
 {
 \footnotesize{
 \begin{tabular}{|l|l|l|}
%\[%\label{Fi-eA}
\hline
$
%\begin{array}{ll}
\F_1:$ & $
        A=\left(
      \begin{array}{ccc}
        0 & 0 & 0 \\
        0 & \al c & {-}\bt c \\
        0 & -\al b & \bt b
      \end{array}
        \right)\quad $ &
$        t=
        \left\{
        \begin{array}{ll}
            \frac{e^{\tr{A}}-1}{\tr{A}}, & \tr{A}\neq 0 \\
                1, & \tr{A}= 0
        \end{array}
        \right.$\\
& $
        \tr{A}=\al c + \bt b $
        &
        $u=0
$
\\
\hline
$
\F_4:$&$
A=\left(
      \begin{array}{ccc}
        0 & \al c & \al b \\
        0 & 0 & -\al a \\
        0 & -\al a & 0 \\
      \end{array}
    \right)\quad $
    &
    $        t=
\left\{
  \begin{array}{ll}
    \frac{\sinh{\sqrt{\frac12 \tr{A^2}}}}{\sqrt{\frac12 \tr{A^2}}}, & \tr{A^2}>0 \\
    1, & \tr{A^2}= 0
  \end{array}
\right.$\\
& $
\tr{A^2}=2\al^2 a^2 $ &
$    u=
\left\{
  \begin{array}{ll}
    \frac{\cosh{\sqrt{\frac12 \tr{A^2}}}-1}{\frac12 \tr{A^2}}, & \tr{A^2}> 0 \\
    0, & \tr{A^2}= 0
  \end{array}
\right.
$
\\
\hline
$
\F_5:$&$
A=\left(
      \begin{array}{ccc}
        0 & \al b & \al c \\
        0 & -\al a & 0 \\
        0 & 0 & -\al a \\
      \end{array}
    \right)\quad $ & $
    t=
\left\{
  \begin{array}{ll}
    \frac{e^{\frac12 \tr{A}}-1}{\frac12 \tr{A}}, & \tr{A}\neq 0 \\
    1, & \tr{A}= 0
  \end{array}
\right.$\\
& $%\phantom{\F_5:\;}
\tr{A}=-2\al a $ &
$        u=0
$
\\
\hline
$
\F_8:$ & $
A=\left(
      \begin{array}{ccc}
        0 & -\al c & \al b \\
        2\al c & 0 & -\al a \\
        -2\al b & \al a & 0 \\
      \end{array}
    \right)\;
$ &
%\left\{
%  \begin{array}{ll}
%    \frac{\sqrt{\Dt}\sin{\al\sqrt{\Dt}}}{\al\Dt}, & \tr{A^2}< 0  \\
%  & \\
%  \end{array}
%\right.
$    t= \frac{\sin\sqrt{-\frac12\tr{A^2}}}{\sqrt{-\frac12\tr{A^2}}}, \quad \tr{A^2}< 0
$\\
& $%\phantom{\F_8:\;}
\begin{array}{l}
\tr{A^2}=-2\al^2(a^2+2b^2+2c^2)
\end{array}
$ & $
    u=\frac{\cos{\sqrt{-\frac12\tr{A^2}}}-1}{\frac12\tr{A^2}}, \quad \tr{A^2}< 0
%\left\{
%  \begin{array}{ll}
%    \frac{1-\cos{\al\sqrt{ \Dt}}}{\al^2\Dt}, & \tr{A^2}< 0\\
%                                                     &  \\
%  \end{array}
%\right.
$
\\
\hline
$
\F_9:$ & $
A=\left(
      \begin{array}{ccc}
        0 & \al b & -\al c \\
        0 & -\al a & 0 \\
        0 & 0 & \al a \\
      \end{array}
    \right)\quad $ & $
    t=
\left\{
  \begin{array}{ll}
    \frac{\sinh{\sqrt{\frac12 \tr{A^2}}}}{\sqrt{\frac12 \tr{A^2}}}, & \tr{A^2}> 0 \\
    1, & \tr{A^2}= 0
  \end{array}
\right.$\\
& $%\phantom{\F_9:\;}
\tr{A^2}=2\al^2 a^2 $ & $
    u=
\left\{
  \begin{array}{ll}
   \frac{\cosh{\sqrt{\frac12 \tr{A^2}}}-1}{\frac12 \tr{A^2}}, & \tr{A^2}> 0 \\
    0,                                                  & \tr{A^2}= 0 \\
  \end{array}
\right.
$
\\
\hline
$
\F_{10}:$ & $
A=\left(
      \begin{array}{ccc}
        0 & \al c & -\al b \\
        0 & 0 & -\al a \\
        0 & -\al a & 0 \\
      \end{array}
    \right)\quad $ & $
    t=
\left\{
  \begin{array}{ll}
     \frac{\sinh{\sqrt{\frac12 \tr{A^2}}}}{\sqrt{\frac12 \tr{A^2}}}, & \tr{A^2}> 0 \\
    1, & \tr{A^2}= 0
  \end{array}
\right.$\\
& $%\phantom{\F_{10}:\;}
\tr{A^2}=2\al^2 a^2 $ & $
    u=
\left\{
  \begin{array}{ll}
     \frac{\cosh{\sqrt{\frac12 \tr{A^2}}}-1}{\frac12 \tr{A^2}}, & \tr{A^2}> 0 \\
    0,                                                  & \tr{A^2}= 0 \\
  \end{array}
\right.
$
\\
\hline
$
\F_{11}:$ & $
A=\left(
      \begin{array}{ccc}
        \al b + \bt c & 0 & 0 \\
        -\al a & 0 & 0 \\
        -\bt a & 0 & 0 \\
      \end{array}
    \right)\quad $ &
$    t=
\left\{
  \begin{array}{ll}
    \frac{e^{\tr{A}}-1}{\tr{A}}, & \tr{A}\neq 0 \\
    1,                        & \tr{A}= 0 \\
  \end{array}
\right.$\\
& $%\phantom{\F_{11}:\;}
\tr{A}=\al b+\bt c $ & $
    u=0
%\end{array}
$\\
\hline
%\]
 \end{tabular}
 \/}
 }
\end{table}

\textbf{\emph{The class $\F_1$.}}
%\textbf{\emph{The case: $\tr A$ can be non-zero.}}
%
Firstly, let $(\LLL, \f, \xi, \eta, g)$ belong to  $\F_1$. In this case, the corresponding Lie algebra  $\mathfrak{g}_1$, according to  Theorem~\ref{thm-Fi-L},  is determined by the following way:
\begin{equation*}\label{com1}
[E_0,E_1]=[E_0,E_2]=0,\quad [E_1,E_2]=\al E_1-\bt E_2,
\end{equation*}
where $\al=\frac12 \ta_1$, $\bt=-\frac12 \ta_2$.
Therefore, 
%By virtue of the latter equalities, 
the nonzero commutation coefficients are: 
\begin{equation}\label{Cij}
C_{12}^1=-C_{21}^1=\al,\quad C_{12}^2=-C_{21}^2={-\bt}.
\end{equation}

%As it is known from  \cite{Gil}, the commutation coefficients provide a matrix representation $A$ of the Lie algebra $\mathfrak{g}$. This representation is obtained by the basic matrices $M_i$ with entries: 
%\begin{equation}\label{Mij}
%(M_i)_j^k=-C_{ij}^k, \quad i, j, k \in \{0,1,2\}.
%\end{equation}

%Then, the matrix representation of the corresponding Lie algebra 
%$\mathfrak{g}$ is the following: 
%\begin{equation}\label{Cij}
%A=aM_0+bM_1+cM_2,  \quad a, b, c \in \R.
%\end{equation}

Because of  \eqref{Mij} and \eqref{Cij}, we have
\[%\label{}
M_0=\left(
      \begin{array}{ccc}
        0 & 0 & 0 \\
        0 & 0 & 0 \\
        0 & 0 & 0 \\
      \end{array}
    \right),\quad
M_1=\left(
      \begin{array}{ccc}
        0 & 0 & 0 \\
        0 & 0 & 0 \\
        0 & -\al & {\bt}\\
      \end{array}
    \right),\quad
M_2=\left(
      \begin{array}{ccc}
        0 & 0 & 0 \\
        0 & \al & {-\bt} \\
        0 & 0 & 0 \\
      \end{array}
    \right).
\]

%Since $\mathfrak{g}$ is not Abelian, then $A$ is not a zero matrix, \ie $(b,c)\neq (0,0)$. 
We have that $(b,c)\neq (0,0)$ is true, otherwise $A$ is a zero matrix and $\mathfrak{g}$ is Abelian. 
Then, using \eqref{A}, we obtain  the matrix representation $A$ of the considered Lie algebra $\mathfrak{g}_1$ given in Table~\ref{T1}. Therefore, the characteristic polynomial of $A$ has the form: 
\[%\label{}
P_A(\lm)= \lm^2{(\lm-\al c - \bt b )}
\] 
and its eigenvalues $\lm_i$ ($i={1,2,3}$) are the following: 
\[%\label{}
\lm_1=\lm_2=0, \qquad \lm_3=\al c + \bt b.
\]
We then obtain the corresponding linearly independent eigenvectors  $p_i$ ($i={1,2,3}$):
\[%\label{}
p_1(1,0,0)^{\intercal}, \qquad p_2(0,\bt,{\al})^{\intercal}, \qquad p_3(0,-c,b)^{\intercal},
\]
using the notation $^\intercal$ for matrix transpose.
The vectors $p_i$  determine the following matrix:
\begin{equation}\label{matrixP-I}
P=\left(
      \begin{array}{ccc}
        1 & 0 & 0 \\
        0 & \bt & -c \\
        0 & {\al} & b
      \end{array}
        \right).
\end{equation}
Using the matrix $A$ for $\F_1$  in Table~\ref{T1} and \eqref{matrixP-I}, we obtain  $\Dt={\det P}=\tr A$, where we  denote $\Dt:=\al c + \bt b $.

Now, let us consider the first case when $\tr A  \neq 0$ holds, \ie $\Dt \neq 0$ and $\det P \neq 0$.  Then, we obtain the inverse matrix of $P$ as follows:
\[%\label{}
P^{-1}=\frac{1}{\Dt}\left(
      \begin{array}{ccc}
        1 & 0 & 0 \\
        0 & {b} & {c} \\
        0 & -\al & {\bt}
      \end{array}
        \right).
\]

It is well known the formula 
\begin{equation}\label{J}
e^A=Pe^JP^{-1},	
\end{equation}
where the Jordan matrix $J$ is the diagonal matrix $J=\mathrm{diag}(\lm_1, \lm_2, \lm_3)$. Therefore, the matrix representation of the corresponding Lie group $\G_1$ of the considered Lie algebra  $\mathfrak{g}_1$  in the first case is the following:
\[%\label{}
\G_1=\left\{
%e^A=
\left(
      \begin{array}{ccc}
        1 & 0 & 0 \\
        0 & 1+\al c t
          & -\bt c t \\
        0 & -\al b t
          & 1 +\bt b t\\
      \end{array}
\right)
\left|
\;
                t=\frac{e^{\Dt}-1}{\Dt},\; \Dt\neq 0
\right.
    \right\}.
\]
%where $t=(e^{\Dt}-1)\Dt^{-1}$. 
This result can be written as
\begin{equation}\label{G-I}
\G_1: \quad %=\left\{
e^A=E+tA, \quad t=\frac{e^{\tr A}-1}{\tr A},\quad  \tr A\neq 0.
%    \right\}.
\end{equation}

Let us consider the second case when $\tr A=0$, \ie $\Dt=0$ and $\det P =0$. Then the matrix $P$  is non-invertible and therefore $A$ is nilpotent with some nilpotency index $q$ and $e^A$ can be expressed as follows
\begin{equation*}\label{eAq}
e^A=E+A+\frac{A^2}{2!}+\frac{A^3}{3!}+\cdots+\frac{A^{q-1}}{(q-1)!}.
\end{equation*}
Using  the form of $A$ for $\F_1$ in Table~\ref{T1} and $\Dt=0$, we obtain $A^2$ is a zero matrix, \ie $q=2$. Therefore, in this case we get the matrix representation of the Lie group $\G_1$ for $\mathfrak{g}_1$ in the following way:
 \[%\label{}
\G_1=\left\{
%e^A=
\left(%\left.
      \begin{array}{ccc}
        1 & 0 & 0 \\
        0 & 1+\al c
          & -\bt c \\
        0 & -\al b
          & 1 + \bt b \\
      \end{array}
    \right)
\Bigl|
\;
    \Dt=0
%\right.
    \right\},
\]
which can be written as 
\begin{equation}\label{G-II}
\G_1:\quad %=\left\{
e^A=E+A, \quad
    \tr A= 0
    %\right\}
		.
\end{equation}

Generalizing \eqref{G-I} and \eqref{G-II}, we get the matrix representation \eqref{eA} of the matrix Lie group $\G_1$, where $A$, $t$ and $u$ are given in Table~\ref{T1} for $(\LLL, \f, \xi, \eta, g)\in \F_1$. 

\textbf{\emph{The classes $\F_5$ and $\F_{11}$.}}
%\textbf{\emph{The case: $\tr A$ can be non-zero.}}
When we consider the cases of $\F_5$ and $\F_{11}$, we notice that  $\tr A$ can be non-zero there, just as for $\F_1$. The results in Table~\ref{T1} for these two classes are obtained in the same way as for $\F_1$.

%\textbf{\emph{The case  $\tr A= 0$, $\tr A^2> 0$.}}
% 
\textbf{\emph{The class $\F_4$.}} 
Now, let us consider $(\LLL, \f, \xi, \eta, g)\in\F_4$. According to Theorem~\ref{thm-Fi-L}, the corresponding Lie algebra  $\mathfrak{g}_4$ is determined by the following way
\begin{equation}\label{C4}
[E_0,E_1]=\al E_2,\quad [E_0,E_2]=\al E_1,\quad [E_1,E_2]=0,
\end{equation}
where $\al=\frac12 \ta_0$.
Bearing in mind \eqref{C4}, the non-zero commutation coefficients are:
\begin{equation}\label{Cij4}
C_{01}^2=-C_{10}^2= C_{02}^1=-C_{20}^1=\al.
\end{equation}

By virtue of \eqref{A}, \eqref{Mij} and \eqref{Cij4}, we obtain the matrix representation $A$ of $\mathfrak{g}_4$ as is given in Table~\ref{T1}. Obviously, we have $\tr A=0$.

We determine the matrix $P$ as in the case of $\F_1$ and obtain
\begin{equation*}\label{matrixP-IV}
P=\left(
      \begin{array}{ccc}
        1 & -b-c & -b+c \\
        0 & a & a \\
        0 & a & -a
      \end{array}
        \right)
\end{equation*}
for $\lm_1=0$, $\lm_2=-\al a$, $\lm_3=\al a$, \ie $J=\mathrm{diag}\{0,-\al a,\al a\}$. 
Therefore, we have $\det P=-2a^2$. Using the form of $A$ in Table~\ref{T1} for $\F_4$ and $\tr A^2 = 2\al^2 a^2$, 
we notice that $P$ is invertible or not depending on $\tr A^2\neq 0$ or $\tr A^2= 0$, respectively.

First, when $\tr A^2$ is non-zero, \ie $\tr A^2>0$ is satisfied, we obtain the inverse matrix of $P$ as follows:
\[%\label{}
P^{-1}=\frac{1}{2a}\left(
      \begin{array}{ccc}
        2a & 2b & 2c \\
        0 & 1 & 1 \\
        0 & 1 & -1
      \end{array}
        \right).
\]
Then, applying \eqref{J}, the following  matrix representation of the Lie group $\G_4$:
\[%\label{}
\G_4=\left\{
%\begin{array}{c}
%e^A=
\left(
      \begin{array}{ccc}
        1 & \frac{b}{a}(1-w)+\frac{c}{a}v & \frac{c}{a}(1-w)+\frac{b}{a}v \\
        0 & w & -v  \\
        0 & -v & w  \\
      \end{array}
    \right)
		\Bigl|
\;
%\\
    %a,b,c\in\RR,
    \; a \neq 0
%\end{array}
\right\},
\]
where $v=\sinh(\al a)$ and $w=\cosh(\al a)$. % \sqrt{\frac{1}{2}\tr A^2}
This result can be written as
\begin{equation}\label{G-IV-1}
\begin{array}{l}
\G_4: \quad e^A=E+t A+ u A^2,
\quad\\[4pt]
\phantom{\G_4: \quad\ }
t=\frac{\sinh \sqrt{\frac{1}{2}\tr A^2}}{ \sqrt{\frac{1}{2}\tr A^2}},
\quad
u=\frac{\cosh \sqrt{\frac{1}{2}\tr A^2}-1}{\frac{1}{2}\tr A^2},\quad
     \tr A^2> 0.
%    \right\}.
\end{array}
\end{equation}

Now, we focus on the second case when $\tr A^2$ vanishes, therefore $a=0$ is valid 
and $P$ is not invertible. Then $A$ is nilpotent with a nilpotency index $q=2$. Therefore, we obtain 
\begin{equation}\label{G-IV-2}
\G_4:\quad %=\left\{
e^A=E+A, \quad
    \tr A^2= 0.
\end{equation}

According to \eqref{G-IV-1} and \eqref{G-IV-2}, the matrix Lie group $\G_4$  has the matrix representation \eqref{eA}, where $A$, $t$ and $u$ are given in Table~\ref{T1}  for $(\LLL, \f, \xi, \eta, g)\in \F_4$.

\textbf{\emph{The classes $\F_9$ and $\F_{10}$.}}
%\textbf{\emph{The case: $\tr A$ can be non-zero.}}
Considering the cases of $\F_9$ and $\F_{10}$, we find that  $\tr A= 0$ and $\tr A^2> 0$ there, just as for $\F_4$. 
The results in Table~\ref{T1} for these two classes are obtained in the same way as for $\F_4$.

\textbf{\emph{The class $\F_8$.}}
% 
%\textbf{\emph{The class $\F_8$.}}
Finally, let us consider the case when $(\LLL, \f, \xi, \eta, g)$ belongs to $\F_8$.
From Theorem~\ref{thm-Fi-L}, we have the following:
\[%\label{}
[E_0,E_1]=\al E_2,\quad [E_0,E_2]=-\al E_1,\quad [E_1,E_2]=2\al E_0,
\]
where $\al=\lm$, according to \eqref{Fi3}.
 
In the same way as in the cases for $\F_1$ and $\F_4$, we obtain the matrix form of $A$ in $\mathfrak{g}_8$ as it is shown in  Table~\ref{T1}. 
It implies $\tr A=0$ and 
$\tr A^2 =- 2\al^2 \Dt$, where $\Dt:=a^2+2b^2+2c^2$. 
Since $\Dt$ is positive for $(a,b,c)\neq (0,0,0)$, then
$\tr A^2$ is negative in this non-trivial case. 

Obviously, the characteristic polynomial of $A$ has the form 
$
P_A(\lm)= \lm\bigl(\lm^2+\al^2\Dt\bigr)
$ and
we get the following eigenvalues of $A$:% $\lm_i$ ($i={1,2,3}$): 
\begin{equation}\label{lm123}
\lm_1=0, \qquad \lm_2=\rm{i}\al  \sqrt{\Dt}, \qquad \lm_3=-\rm{i}\al  \sqrt{\Dt},
\end{equation}
where $\rm{i}=\sqrt{-1}$.
Next, we obtain the corresponding linearly independent eigenvectors  $p_i$ ($i={1,2,3}$):
\begin{equation*}\label{}
\begin{array}{c}
p_1(a,\, 2b,\, 2c)^\intercal, \quad p_2(-ac-\ii b\sqrt{\Dt},\, -2bc+\ii a\sqrt{\Dt},\, a^2 +2b^2)^{\intercal},\\[4pt]
  p_3(-ac+\ii b\sqrt{\Dt},\, -2bc-\ii a\sqrt{\Dt},\, a^2 +2b^2)^{\intercal}
  \end{array}
\end{equation*}
and they form the following matrix
\begin{equation}\label{matrixP}
P=\left(
      \begin{array}{ccc}
        a & -ac-\ii b\sqrt{\Dt} & -ac+\ii b\sqrt{\Dt} \\
        2b & -2bc+\ii a\sqrt{\Dt}  & -2bc-\ii a\sqrt{\Dt} \\
        2c & a^2 +2b^2 & a^2 +2b^2 
      \end{array}
        \right)
\end{equation}
with $\det P=2\ii (a^2+2b^2)\Dt\sqrt{\Dt}$. 
Therefore, $P$ is invertible (respectively, non-invertible) if and only if $(a,b)\neq (0,0)$ (respectively, $(a,b)= (0,0)$ and $c\neq 0$).

% Then, in this class we obtain the two cases according to $\tr A^2 =- 2\al^2 \Dt$.
 
 Firstly, let us consider the case when $P$ is invertible, \ie $(a,b)\neq (0,0)$.  Then, we obtain the inverse matrix of $P$ as follows:

\[%\label{}
P^{-1}=\left(
      \begin{array}{ccc}
        \frac{a}{\Dt} & \frac{b}{\Dt} & \frac{c}{\Dt} \\
        \frac{-ac+\ii b\sqrt{\Dt}}{\Dt(a^2 +2b^2)} & -\frac{2bc+\ii a\sqrt{\Dt}}{2\Dt(a^2 +2b^2)} & \frac{1}{2\Dt} \\
        -\frac{ac+\ii b\sqrt{\Dt}}{\Dt(a^2 +2b^2)} & \frac{-2bc+\ii a\sqrt{\Dt}}{2\Dt(a^2 +2b^2)}  & \frac{1}{2\Dt}
      \end{array}
        \right).
\]

Therefore, we obtain the matrix representation of the  Lie group $\G_8$ for  $\mathfrak{g}_8$  in the following way:
\[%\label{}
\G_8=\left\{
%e^A=
\left(
      \begin{array}{ccc}
        \al^2 u(a^2 - \Dt)+1& ab\al^2 u-\al t & ac\al^2 u+b\al t \\
         2ab\al^2 u+2c\al t & \al^2 u(2b^2 - \Dt)+1
          & 2bc\al^2 u-a\al t\\
         2ac\al^2 u-2b\al t & 2bc\al^2 u+a\al t
          & \al^2 u(2c^2 - \Dt)+1\\
      \end{array}
   \right)	\Bigl|\
      \begin{array}{l}
                \Dt >  0
      \end{array}
    \right\}.
\]
%where 
%$t=\left[\sqrt{\Dt}\sin\left(\al\sqrt{\Dt}\right)\right](\al\Dt)^{-1}$ and  $u=\left[1-\cos\left(\al\sqrt{\Dt}\right)\right]\al^{-2}\Dt^{-1}$. 
This result can be written as
\begin{equation}\label{G-8-I}
\begin{array}{l}
\G_8: \quad %=\left\{
e^A=E+tA+uA^2,
 \quad\\[4pt]
\phantom{\G_8: \quad\ }
 t=\frac{\sin\left(\al\sqrt{\Dt}\right)}{\al \sqrt{\Dt}},\quad  u=\frac{1-\cos\left(\al\sqrt{\Dt}\right)}{\al^2 \Dt},\quad  \Dt> 0.
 \end{array}
\end{equation}

Now, let us consider the case when $\det P=0$ for $P$ in \eqref{matrixP}, \ie $(a,b)= (0,0)$ and $c\neq 0$. 
In this case 
we specialize the form of $A$ and obtain its eigenvectors $p_i$ $(i = 1,2,3)$ corresponding to its eigenvalues $\lm_i$ 
in \eqref{lm123}, where $\Dt$ is specialized as $\Dt=2c^2$. Then, the consequent matrix $P$ has the following form
\begin{equation*}\label{matrixP-II}
P=\left(
      \begin{array}{ccc}
        0 & \frac{\ii \sqrt{2}}{2} & -\frac{\ii \sqrt{2}}{2} \\
        0 & 1  & 1 \\
        1 &0 & 0
      \end{array}
        \right)
\end{equation*}
with $\det P=\ii\sqrt{2}$. 
Obviously,  $P$ is invertible now and then its inverse matrix is the following
\[%\label{}
P^{-1}=\left(
      \begin{array}{ccc}
        0 & 0 & 1 \\
        -\frac{\ii}{\sqrt{2}} & \frac{1}{2} & 0 \\
        \frac{\ii}{\sqrt{2}} & \frac{1}{2}  & 0
      \end{array}
        \right).
\]
Thus, using formula \eqref{J}, the matrix representation of the Lie group $\G_8$ 
%of the considered Lie algebra $\mathfrak{g}_8$ 
in this case is the following: 
\[%\label{}
\G_8=\left\{
\left(
      \begin{array}{ccc}
        1-\al^2 c^2 u&-\al c t & 0\\
         2\al c t &    1-\al^2 c^2 u
          & 0\\
         0 & 0
          & 1\\
      \end{array}
   \right)	\Bigl|\;
      \begin{array}{ll}
               % \Dt \neq  0
               (a,b)= (0,0),\;
                 c\neq 0
      \end{array}
    \right\},
\]
which can be written as 
\begin{equation*}\label{G-8-II}
\begin{array}{l}
\G_8: \quad %=\left\{
e^A=E+tA+uA^2,
 \quad\\[4pt]
\phantom{\G_8: \quad\ }
 t=\frac{\sin\left(\al|c|\sqrt{2}\right)}{\al |c|\sqrt{2}},\quad  u=\frac{1-\cos\left(\al|c|\sqrt{2}\right)}{2\al^2 c^2},
 \end{array}
\end{equation*}
which coincides with \eqref{G-8-I} in the special case of $\Dt=2c^2$.

Finally, the results in both cases for $(a,b)\neq (0,0)$ and $(a,b)= (0,0)$, $c\neq 0$ 
can be combined as it is shown in Table~\ref{T1}
for $(\LLL, \f, \xi, \eta, g)\in \F_8$. 

The latter completes the proof of the theorem.
\end{proof}

 Bearing in mind \cororref{cor:para} and \thmref{thm:main}, we obtain immediately the following 
 \begin{cor}
If $(\LLL,\f,\xi,\eta,g)$ is para-Sasakian, %paracomplex Riemannian manifold. % belonging to the class $\F'_4$. 
then the compact simply connected Lie group $\G$ isomorphic to $\LLL$, both with one and the same Lie algebra, has the form \eqref{eA}, \ie $e^A=E+tA+uA^2$, where for $a,b,c\in\R$ we have
\[
A=\left(
      \begin{array}{ccc}
        0 & -c & - b \\
        0 & 0 & a \\
        0 &  a & 0 \\
      \end{array}
    \right), 
\quad
t=
\left\{
  \begin{array}{ll}
    \frac{\sinh |a|}{|a|}, & a\neq0 \\
    1, &  a= 0
  \end{array}
\right.,
\quad   
u=
\left\{
  \begin{array}{ll}
    \frac{\cosh |a|-1}{|a|}, & a\neq 0 \\
    0, & a= 0
  \end{array}
\right.
.
\]
\end{cor}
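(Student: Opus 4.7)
The plan is to recognize this corollary as a direct specialization of \thmref{thm:main} once \cororref{cor:para} is used to pin down which basic class contains the para-Sasakian structure and which parameter value is in force. Comparing the commutators $[E_0,E_1]=-E_2$, $[E_0,E_2]=-E_1$, $[E_1,E_2]=0$ from \cororref{cor:para} with the $\F_4$-row of \thmref{thm-Fi-L}, namely $[E_0,E_1]=\al E_2$, $[E_0,E_2]=\al E_1$, $[E_1,E_2]=0$, I read off at once that the para-Sasakian case is the subclass of $\F_4$ singled out by $\al=-1$ (in agreement with the normalization $\ta(\xi)=-2n=-2$ recalled just before \cororref{cor:para}, since in $\F_4$ one has $\al=\tfrac12\ta_0$).

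With the class and the parameter fixed, the plan reduces to substituting $\al=-1$ into the $\F_4$-entry of \tablref{T1}. First I would specialize the matrix: the entries $\al c$, $\al b$, $-\al a$, $-\al a$ collapse to $-c$, $-b$, $a$, $a$, reproducing the displayed form of $A$ in the corollary. Then I would compute $\tr A^2=2\al^2 a^2=2a^2$, so $\tfrac12 \tr A^2=a^2$ and $\sqrt{\tfrac12 \tr A^2}=|a|$. Plugging these into the $\F_4$-formulas for $t$ and $u$ from \tablref{T1} yields the two-case expressions stated in the corollary, and the invocation of formula \eqref{eA} from \thmref{thm:main} then finishes the identification of the matrix Lie group $\G$.

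There is no real obstacle here, since the corollary is purely a substitution into a theorem proved in the previous section; the only small point worth mentioning in the write-up is that the condition $\tr A^2=0$ is equivalent to $a=0$, so that the nilpotent case of $\F_4$ corresponds cleanly to the second branch of the piecewise definitions of $t$ and $u$. I would state the proof in a couple of lines, explicitly citing \cororref{cor:para} for the identification of the Lie algebra and \thmref{thm:main} (together with the $\F_4$-row of \tablref{T1}) for the matrix representation of $\G$, and leave the trivial algebraic simplification to the reader.
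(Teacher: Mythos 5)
Your proposal is correct and matches the paper exactly: the paper offers no separate argument, stating only that the corollary follows immediately from Corollary~\ref{cor:para} and Theorem~\ref{thm:main}, which is precisely the specialization $\al=-1$ in the $\F_4$ row of Table~\ref{T1} that you carry out. One small caveat: the substitution actually gives $u=\frac{\cosh|a|-1}{a^{2}}$ (since $\tfrac12\tr A^{2}=a^{2}$), so the denominator $|a|$ printed in the corollary's formula for $u$ appears to be a typo in the paper rather than something your computation literally reproduces.
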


 \indent

\subsection*{Acknowledgment}
The authors were supported by project MU19-FMI-020 and FP19-FMI-002 of the Scientific Research Fund,
University of Plovdiv Paisii Hilendarski, Bulgaria.
\bigskip

 \bigskip

{\small\rm\baselineskip=10pt
 \baselineskip=10pt
 \qquad Mancho Manev \par
 \qquad University of Plovdiv Paisii Hilendarski, Faculty of Mathematics and
Informatics \par
 \qquad Department of Algebra and Geometry \par
 \qquad 24 Tzar Asen St, 4000 Plovdiv,
Bulgaria \par
 \qquad \& \par
\qquad  Medical University of Plovdiv, 
Faculty of Public Health \par
 \qquad Department of Medical Informatics, Biostatistics and E-Learning \par
 \qquad 15A 	Vasil Aprilov Blvd, 4002 Plovdiv,
Bulgaria \par
 \qquad {\tt mmanev@uni-plovdiv.bg}

 \bigskip \smallskip

 \qquad Veselina Tavkova \par
 \qquad University of Plovdiv Paisii Hilendarski,
Faculty of Mathematics and Informatics \par
 \qquad Department of Algebra and Geometry\par
 \qquad 24 Tzar Asen St, 4000 Plovdiv,
Bulgaria\par
 \qquad {\tt vtavkova@uni-plovdiv.bg}
 }

 \end{document}